\documentclass[10pt]{article}
\usepackage{amsmath}
\usepackage{graphicx}
\usepackage{authblk}
\usepackage{amsfonts}

\usepackage{amsthm}
\usepackage{amsfonts}
\usepackage{algorithm}
\usepackage{algorithmic}
\usepackage{color}

\newtheorem{theorem}{Theorem}

\newtheorem{definition}{Definition} 

\newtheorem{remark}{Remark}

\begin{document}

\title{Counting Roots of a Polynomial in a Convex Compact Region by Means of Winding Number Calculation via Sampling}

\author{Vitaly Zaderman$^{[1]}$ and
Liang Zhao$^{[2]}$
\\
\and
$^{[1]}$
Ph.D. Program in Mathematics,
 
The Graduate Center of the City University of New York, 

New York, NY 10016, USA  

vza52@aol.com 

$^{[2]}$
 Department of Computer Science,
 
The Graduate Center of the City University of New York, 

 New York, NY 10016, USA 
 
liang.zhao1@lehman.cuny.edu
}

\date{}
\maketitle

\begin{abstract}
In this paper, we propose a novel efficient algorithm for calculating winding numbers, aiming at counting the number of roots of a given polynomial in a convex region on the complex plane. This algorithm can be used for counting and exclusion tests in a subdivision algorithms for polynomial root-finding, and would be especially useful in application scenarios where high-precision polynomial coefficients are hard to obtain but we succeed with counting already by  using polynomial evaluation with lower precision. We provide the pseudo code of the algorithm, proof of its correctness  as well as estimation of its complexity.
\end{abstract}

\paragraph{\bf Key Words:}
Polynomial root-finding; Winding number

\section{Introduction}
Let
\begin{equation}\label{eqn:poly}
    p(z) = \sum_{k=0}^d p_k z^k = p_d\prod_{j=1}^d(z-z_j), p_d\neq 0
\end{equation}
be a polynomial of degree $d$ with
real or complex
coefficients. Counting its roots (with their multiplicity) in a fixed domain
(such as an interior of a polygon or a disc)
is a fundamental problem with an important 
application to devising efficient root-finders for $p(z)$
on the complex plane, particularly subdivision algorithms,
proposed by Hermann Weyl in \cite {W24} and then extended and improved 
in 
\cite{HG69}, \cite{H74}, \cite{R87}, \cite{P00}, \cite{BSSXY16}, 
and \cite{BSSY18}
\footnote {The authors of \cite{HG69}, \cite{H74}, \cite{P00} called it Quadtree algorithm, and under that name it was extensively used in computational geometry.}
and recently implemented in \cite {IPY18}.

 We propose a new algorithm for counting the roots in a fixed
 convex
  region on the complex plane
 by expressing their number as the winding number computed along the boundary of the region,
 provided that the boundary was sufficiently isolated from the roots of $p(z)$.
 
 Winding number algorithms have been proposed for counting roots in a disc
 as parts of root-finding algorithms 
 by Henrici and Gargantini in \cite{HG69}, then by Henrici in  \cite{H74}
 and by Renegar in  \cite{R87}. Pan in \cite{P00} used root-radii algorithm by 
 Sch{\"o}nhage   \cite{S82} for counting roots in a disc, and 
 Becker et al. in
  \cite{BSSXY16} 
and \cite{BSSY18} performed counting based on Pellet's theorem.

Our  winding number computation shares some techniques with
 the algorithms of  \cite{H74}  and  \cite{R87}, 
but there the algorithms have only devised in the special case of a disc rather than an arbitrary convex compact region, and unlike these papers
we ensure numerical stability of our 
computation of the winding number. 
Another method using insertion technique, but not requiring isolation of the input region 
has been
proposed by Zapata and Martin in \cite{ZM12}, \cite{ZM14}. 
We evaluate an input polynomial $p(z)$
at some additional auxiliary points 
that we insert  a priori on the boundary of the input
 region. In this way we made our parametrization is smooth
  on the associated sub-segments of the boundary curve.
  
Our proposed root-counting algorithm has the following computational advantages:
\begin{itemize}
    \item It does not involve polynomial coefficients: only polynomial evaluations are required. This is especially useful when polynomial evaluations can be provided as a fast "black box".
    \item Computational precision can be kept low: the algorithm 
     outputs the winding number correctly as long as polynomial evaluations are precise enough to indicate correctly     
     the quadrant of the complex plane in which the
     values of the polynomial lies.
    \item Besides evaluating polynomials, only integer calculations are involved.
\end{itemize}

We present our algorithm in the next section and then continue the paper in section \ref{sec:3}
by proving its correctness.

\section{Winding Number Calculation via Sampling}
Suppose that $p(z)$ is a polynomial of
 Eqn. \eqref{eqn:poly}, $\gamma:[0, 1]\rightarrow \mathbb{C}$ is a simple convex closed piece-wise  smooth curve, and $\Gamma$ 
 is the region enclosed by $\gamma$. The winding number 
 $\omega_{p\circ\gamma}$ of a curve $p\circ \gamma$ is the 
 number of counterclockwise turns that $p(\gamma(t))$ makes around the origin as $t$ increases from $0$ to $1$. Namely, 
\begin{equation}\label{eqn:wn}
    \omega_{p\circ\gamma} = \frac{1}{2\pi}\oint_{\gamma}\frac{p'(z)}{p(z)}dz = \frac{1}{2\pi}\int_0^1 \frac{p'(\gamma(t))\gamma'(t)}{p(\gamma(t))}dt.
\end{equation}
Hereafter we write 
$\omega:=\omega_{p\circ\gamma}$
 omitting the subscript $p\circ\gamma$.
  It is  well-known by principle of argument, 
  that if $(p\circ\gamma)(t)\neq 0$ for all $t\in[0, 1]$, then the winding number $\omega$ is a well-defined integer
   equals to the number of the 
   roots of $p(z)$ inside the 
   region bounded by $\Gamma$.

In this paper we aim 
at developing algorithm that calculates winding numbers of $p\circ\gamma$, where $p$ is a univariate polynomial whose roots lie 
reasonably far 
from $\gamma$. In particular, such algorithm can be applied to circles, squares or polygons. Before diving into the details of the algorithm, we should clarify the assumptions  
about the curve $\gamma:[0, 1]\rightarrow\mathbb{C}$.

{\bf Assumptions on $\gamma$}. 
\begin{enumerate}
    \item $\gamma$ is
    the boundary of a connected convex region $\Gamma$ on the complex plane. 
  
    It
    is a convex closed curve, i.e., $\gamma(0) = \gamma(1)$.
    \item There exists the continuous derivative $\gamma'(t)$ except for
    $t$ 
    lying in a finite subset $T\subset [0, 1]$ (which is 
    relatively small).
     \item Furthermore the derivative $\gamma'(t)$ is bounded by $L$ from above, that is,
    \begin{equation}
        L = \max_{t\in[0,1]\backslash T}|\gamma'(t)|
    \end{equation} 
    \item $\gamma$ is $\frac{2}{3}r$-isolating the roots of polynomial
    
     $p(z)$, meaning that 
    the minimum distance between a point on the curve $\gamma$ and a root of $p(z)$ is 
at least    
    $\frac{2}{3}r$ where $r$ denotes the minimal distance between the
    
     origin and the curve $p\circ \gamma$, that is,
    \begin{equation}
        r = \min_{t\in[0,1]}|(p\circ\gamma)(t)|.
    \end{equation} 
    In particular $(p\circ\gamma)(t)\neq 0$ for all $t\in[0, 1]$.
\end{enumerate}
    
\begin{remark}
  {
For a convex domain with a center 
(which covers a disc and an interior of a rectangle as particular cases)
we can define its dilation with a coefficient $\theta>1$.
If the number of roots of $p(z)$ in the domain is invariant in its dilation
with coefficients $\theta$ and  $1/\theta$, then we call the domain 
$\theta$-isolated. We can square isolation coefficient $\theta$ by  performing 
     Dandelin's
 root-squaring iteration}
     $p(z)\rightarrow (-1)^d~
 p(\sqrt z~)p(-\sqrt {z}~)$ 
 (cf. \cite{Ho59} ).
 $s$ iterations
   $$p_0(z)=p(z),~p_{j+1}(z)=(-1)^d~
 p_j(\sqrt z~)p_j(-\sqrt {z}~),~j=0,1,\dots,s$$ 
 change that coefficient into $\theta^{2^s}$ .
 \end{remark}
    
The core idea of our winding number algorithm is to compute the number of turns of $\Gamma$ 
around $0$. We do it by computing polynomial in finite number 
 of points
   $t_0,...,t_N\in[0, 1]$. More precisely we  
   correctly compute the number of roots in a given region if
   for every $i$ 
   the actual value $p(\gamma(t_i))$ and the computed value of $p(x)$ at the point $\gamma(t_i)$ lie in the same quadrant  on the complex plane, labeled by the following
  integers $m(p(\gamma(t_i))$.
\begin{definition}

 {
Given polynomial $p(z)$, closed curve $\gamma:[0,1]\rightarrow\mathbb{C}$, and $t\in[0,1]$, the 
quadrant label
}
 $m_t = m((p\circ\gamma)(t))$ 
 { 
 is defined as
 }
\begin{equation}
    m_t = m((p\circ\gamma)(t)) =
    \left\{
    \begin{array}{cc}
        0 & \textit{   if } Re((p\circ\gamma) (t))> 0, Im((p\circ\gamma)(t))\ge 0 \\
        1 & \textit{   if } Re((p\circ\gamma)(t))\le 0, Im((p\circ\gamma) (t))> 0 \\
        2 & \textit{   if } Re((p\circ\gamma)(t))<0, Im((p\circ\gamma)(t)  \le 0 \\
        3 & \textit{   if } Re((p\circ\gamma)(t))\ge0, Im((p\circ\gamma)(t))<0 \\
    \end{array}
    \right.
\end{equation}
\end{definition}

We simplify the notation  
 letting the integers $m_0, ..., m_N$  denote the quadrant labels
 for a sequence $0 = t _0<t_1<\cdots <t_N\le 1$. 

We are going to prove 
that the winding number increases by 1 (respectively, decreases by 1) whenever a sub-sequence $(m_0,...,m_l)$ goes through all four quadrants counterclockwise (respectively, clockwise). 

$1\rightarrow 2\rightarrow 3\rightarrow 3\rightarrow 0\rightarrow 1$
is an example of a full counterclockwise cycle, and  
$3\rightarrow 2\rightarrow 1\rightarrow 2\rightarrow 1\rightarrow 0\rightarrow 3$
is an example of a full clockwise cycle. 
 
Notice 
that in the latter example the labels go counterclockwise at some point (the $1\rightarrow 2$ part), but  do not complete a full counterclockwise cycle and thus make no impact on the value of winding number. 

To calculate the number of cycles in quadrant labels, we take the difference of each quadrant label with its preceding label modulo 4. For example, the difference between
label 2 and its preceding label 1
is $2-1 \equiv 1(\textit {mod }4)$; the difference between label 0 and its preceding label 3 is also 1, since $0-3=-3\equiv 1(\textit{mod }4)$. 

 Notice that for a counterclockwise cycle, the overall sum of these differences must equal 4 (as there must be 4 net increases in quadrant labels); for a clockwise cycle, the overall sum of 
the label differences must be -4 (as there must be 4 net decreases in quadrant labels). As a result, if we construct sequence $m(0), ..., m(N)$ where $m(0) = m_0$ and $m(k)$ for $k=1,...,N$ are chosen such that $m(k) - m(k-1)\in \{0, 1, 2, 3\}$ and $m(k) - m(k-1)\equiv m_k - m_{k-1} (\textit{mod } 4)$, then $(m(N) - m(0))/4$ will be the number of counterclockwise cycles minus the number of clockwise cycles.

In order to establish the link between winding number and the cycles of quadrant labels, we need to eliminate two possibilities: 1) a full cycle of the curve that does not correspond to a full cycle of quadrant labels (this may happen if the sampled points are too far apart, for instance only three first-quadrant points from a cycle are sampled, showing labels $0\rightarrow 0\rightarrow 0$), and 2) we  cannot  
determine whether a full cycle of quadrant labels is a clockwise or counterclockwise 
cycle
(this may happen when two consecutive quadrant labels differ by more than 1, e.g., if $0\rightarrow 2\rightarrow 0$).  Our winding number algorithm ensures that the points are sampled properly so that neither bad scenario will occur, and so the winding number can be calculated correctly as
\begin{equation}
    \omega = \frac{m(N) - m(0)}{4}.
\end{equation}

\begin{algorithm}\label{alg:WN}
\caption{The Winding Number Algorithm}
\begin{algorithmic}[1]
\REQUIRE A polynomial $p(z)=\sum_{k=0}^d p_kx^k$, a region $\Gamma$ with boundary parametrized as a piece-wise smooth curve $\gamma:[0,1]\rightarrow \mathbb{C}$, $r>0$, $L>0$.
\ENSURE A positive integer $\omega$ such that if $\gamma$, $r$, $L$ satisfy Assumption 1-4, then $\omega$ equals to the winding number of $p\circ\gamma$.
\STATE Sample $N=\lceil{\frac{12dL}{\pi r}}\rceil + |T|$ points $0 = 
t_0<t_1<\cdots<t_N \leq 1$ such that $T\subset\{t_i:0\le i\le N\}$ and $t_i - t_{i-1}\le \frac{\pi r}{12dL}$ for all $i=1,...,N+1, t_{N+1}:=t_0$.
\STATE $m_0\leftarrow $ the quadrant label of $(p\circ\gamma)(t_0)$.
\FOR{i=1 to N}
\STATE $m_i\leftarrow $ the quadrant label of $(p\circ\gamma)(t_i)$
\STATE Choose $m(i)$ such that $\{0,1,2,3\}\ni m(i) - m(i-1)\equiv m_i - m_{i-1}($mod $4)$.
\ENDFOR  \ i
\RETURN $\frac{m(N) - m(0)}{4}$.
\end{algorithmic}

\end{algorithm}

\section{Correctness of 
the Winding Number \\
Algorithm}
\label{sec:3}

In this section we 
prove that 
our algorithm indeed produces 
correct winding number.

\begin{theorem}

 {
For  a degree $d$ univariate polynomial $p(z)$, a parametrized curve
 satisfying Assumption 1 - 4,
and a sequence $0=t_0<t_1<\cdots <t_N\le 1$ such that $|t_i - t_{i-1}| \le \frac{\pi r}{12dL}$ for all $i=1,...,N+1, t_{N+1}:= t_0$, construct using Algorithm 1 
a sequence of integers $m(0),...,m(N)$ such that $m(0)=m_0$, $m(i)-m(i-1)\in\{0,1,2,3\}$, and $m(i)-m(i-1)\equiv m_i-m_{i-1}(\textit{mod }4)$ for $i=1,...,N$, where $m_i$ is the quadrant label of $(p\circ\gamma)(t_i)$. Then the winding number $\omega$ of $p(z)$ along curve $\gamma$ 
is equal to
\begin{equation}
    \omega = \frac{m(N) - m(0)}{4}.
\end{equation}
}
\end{theorem}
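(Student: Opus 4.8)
The plan is to compare the purely combinatorial quantity $m(N)-m(0)$ produced by the algorithm with the true total variation of the argument of $p\circ\gamma$. By Assumption 4 we have $(p\circ\gamma)(t)\neq 0$ on $[0,1]$, so the curve admits a continuous branch of the argument $\phi(t)=\arg((p\circ\gamma)(t))$, and by the argument principle \eqref{eqn:wn} one has $\omega=\frac{1}{2\pi}(\phi(1)-\phi(0))$. Setting $M(t)=\frac{2}{\pi}\phi(t)$, the quadrant label is exactly $m_t=\lfloor M(t)\rfloor \bmod 4$ (the half-open conventions in the definition of $m_t$ are chosen precisely to match the branch transitions at the four axes), and $\omega=\frac{1}{4}(M(1)-M(0))$. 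Thus it suffices to show that the lifted labels track $\lfloor M(t_i)\rfloor$ step by step, which reduces the theorem to a single quantitative estimate on how fast $\phi$ can move between two consecutive samples.

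The key estimate is a bound on $|\phi'(t)|$. On each subinterval $(t_{i-1},t_i)$ the curve is smooth (Assumption 2, together with the requirement $T\subset\{t_i\}$), and there
\[
\phi'(t)=\mathrm{Im}\,\frac{(p\circ\gamma)'(t)}{(p\circ\gamma)(t)}=\mathrm{Im}\left(\gamma'(t)\sum_{j=1}^d\frac{1}{\gamma(t)-z_j}\right).
\]
Using $|\gamma'(t)|\le L$ (Assumption 3) and the isolation bound $|\gamma(t)-z_j|\ge \tfrac{2}{3}r$ for every root $z_j$ (Assumption 4), I get $|\phi'(t)|\le L\sum_{j=1}^d\frac{1}{|\gamma(t)-z_j|}\le \frac{3dL}{2r}$. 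Integrating over a sampling step of length at most $\frac{\pi r}{12dL}$ yields $|\phi(t_i)-\phi(t_{i-1})|\le \frac{\pi}{8}$, that is $|M(t_i)-M(t_{i-1})|\le \frac{1}{4}<1$.

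With $|M(t_i)-M(t_{i-1})|<1$ in hand, the combinatorics closes. Since consecutive values of $M$ differ by less than one, $\lfloor M(t_i)\rfloor-\lfloor M(t_{i-1})\rfloor\in\{-1,0,1\}$; this is precisely the statement that between two samples the curve crosses at most one of the four coordinate axes, which simultaneously excludes both failure modes flagged before the algorithm (a full turn slipping between samples would require $|\Delta M|\ge 4$, and an ambiguous $0\leftrightarrow 2$ jump would require $|\Delta M|>1$). Consequently the residue $m_i-m_{i-1}\bmod 4$ has a unique representative of absolute value at most $1$, and that representative equals the integer $\lfloor M(t_i)\rfloor-\lfloor M(t_{i-1})\rfloor$ exactly, i.e. the signed count of axis crossings with clockwise crossings counted negatively. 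Telescoping over $i=1,\dots,N$ gives $m(N)-m(0)=\lfloor M(t_N)\rfloor-\lfloor M(t_0)\rfloor$, and since the sampling closes the loop (the wrap-around condition $t_{N+1}=t_0$ together with $\gamma(1)=\gamma(0)$ makes $M(t_N)-M(t_0)=M(1)-M(0)=4\omega$), I conclude $m(N)-m(0)=4\omega$, hence $\omega=\frac{m(N)-m(0)}{4}$.

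The part I expect to be most delicate is exactly this last bookkeeping: one must ensure the per-step lift encodes the \emph{signed} number of quadrant crossings, so that clockwise sub-arcs of $p\circ\gamma$ (which do occur even when $\omega>0$, since the image curve is free to back up locally) cancel correctly rather than accumulating spuriously; the $\frac{\pi}{8}$ margin is what guarantees the representative is unambiguous and lets me identify it with $\lfloor M(t_i)\rfloor-\lfloor M(t_{i-1})\rfloor$. Two minor points also need attention: the measure-zero situations where some $(p\circ\gamma)(t_i)$ lands exactly on an axis (resolved by the half-open conventions built into the definition of $m_t$), and the closure of the loop at $t_N$, where I must verify that the final wrap-around arc contributes no extra quadrant so that the telescoped total is exactly $4\omega$ and in particular divisible by $4$.
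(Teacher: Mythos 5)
Your analytic core is correct and rests on essentially the same estimate the paper uses, but your packaging is genuinely different and tighter. The paper splits the work into two separate claims: (1) $p\circ\gamma$ cannot complete a full turn between consecutive samples, proved by bounding $\bigl|\int_{t_{i-1}}^{t_i}\frac{p'(\gamma(t))\gamma'(t)}{p(\gamma(t))}dt\bigr|\le \frac{3dL}{r}(t_i-t_{i-1})\le\frac{\pi}{4}$, and (2) consecutive quadrant labels differ by at most one, proved by contradiction: a two-quadrant jump forces some single factor $\arg(\gamma(t)-z_j)$ to move by more than $\pi/(4d)$, which is then ruled out by a chord-length computation comparing $|\gamma(t_i)-\gamma(t_{i-1})|\le \pi r/(12d)$ against $|\gamma(t_i)-z_j|,|\gamma(t_{i-1})-z_j|\ge \frac{2}{3}r$. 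Your single derivative bound $|\phi'|\le\frac{3dL}{2r}$ (which uses the full isolation $\frac{2}{3}r$ where the paper's claim 1 only uses $\frac{r}{3}$) gives $|\Delta\phi|\le\pi/8$ per step and subsumes both claims at once; and the identification $m_t=\lfloor M(t)\rfloor\bmod 4$ turns the paper's informal ``counterclockwise cycles minus clockwise cycles'' bookkeeping into an exact telescoping identity. That is a real improvement in rigor.

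There is, however, one substitution you make silently that must be made explicit, because it changes the statement being proved. Your telescoping $m(N)-m(0)=\lfloor M(t_N)\rfloor-\lfloor M(t_0)\rfloor$ requires the increments $m(i)-m(i-1)$ to be the representatives of $m_i-m_{i-1}\pmod 4$ lying in $\{-1,0,1\}$, whereas the theorem and Algorithm 1 prescribe representatives in $\{0,1,2,3\}$. Under the latter normalization every backward axis crossing contributes $+3$ instead of $-1$, so $m(N)-m(0)$ overshoots $4\omega$ by $4$ for each such crossing, and the stated formula fails exactly in the situation you yourself flag (the image curve backing up locally even when $\omega>0$). The paper is internally inconsistent on this point --- its own proof asserts that a clockwise cycle yields $m(j)-m(i)=-4$, which is impossible with nonnegative increments --- so your reading is the correct repair, but it is a correction of the hypothesis, not a consequence of it, and a referee would want that said. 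Your second flagged worry is also genuine and not resolved by either you or the paper: the algorithm stops at $m(N)$, so if an axis crossing occurs on the wrap-around arc $(t_N,1]$ then $\lfloor M(t_N)\rfloor\ne\lfloor M(1)\rfloor$ and $m(N)-m(0)=4\omega\pm 1$ is not even divisible by $4$; one needs $t_N=1$ or one additional lifted label at $t_{N+1}=t_0$ to close the loop.
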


\begin{proof} 

 {
On each segment $[t_i, t_{i-1}], \gamma (t)$ is smooth. If a sequence of consecutive labels $m(i), m(i+1), ..., m(j)$ completes a counterclockwise cycle, then the sum of differences must equal to 4, i.e., 
\begin{equation}
   m(j) - m(i) = \sum_{k=i}^{j-1} (m(k+1) - m(k)) = 4.
\end{equation}
Similarly, a sequence of labels representing a clockwise cycle must satisfy $m(j) - m(i) = -4$. Thus the overall sum $\frac{m(N) - m(0)}{4}$ 
is equal to the number of counterclockwise cycles minus the number of clockwise cycles. Given this property, it suffices to show that for any $i=1,...,N$ 
it holds that
\begin{enumerate}
    \item It is impossible that the curve $p\circ\gamma$ can complete a full turn in $[t_i, t_{i+1}]$, that is, 
    \begin{equation}\label{eqn:cond1}
        \frac{1}{2\pi}\int_{t_{i-1}}^{t_{i}} \frac{p'(\gamma(t))\gamma'(t)}{p(\gamma(t))}dt < 1
    \end{equation}.
    \item The quadrant labels $m_i$ 
    differs from $m_{i-1}$ by at most $1$, that is,
    \begin{equation}
        |m_i - m_{i-1}|\le 1.
    \end{equation}
\end{enumerate}
}

Proof of claim 
1. Recall that $p(z) = p_d  \prod_{j=1}^d(z-z_j)$ and that
\begin{equation}
    \frac{p'(z)}{p(z)}=\sum_{j=1}^d\frac{1}{z-z_j}.
\end{equation}
We will show that the integral in Eqn. \eqref{eqn:cond1} is less than $2\pi$.  
It follows
that
\begin{equation}
    \aligned
    \big|\int_{t_{i-1}}^{t_{i}} \frac{p'(\gamma(t))\gamma'(t)}{p(\gamma(t))}dt\big|
    \le&\int_{t_{i-1}}^{t_{i}} \big|\frac{p'(\gamma(t))}{p(\gamma(t))}\big||{\gamma'(t)}|dt\\
    \le&L\int_{t_{i-1}}^{t_{i}}\sum_{j=1}^d\big|\frac{1}{\gamma(t) - z_j}\big|dt\\
    \le&L\int_{t_{i-1}}^{t_{i}}\frac{3d}{r}dt\\
    =& \frac{3dL}{r}(t_i - t_{i-1})\\
    \le& \frac{3dL}{r}\cdot\frac{\pi r}{12dL}\\
    =& \frac {\pi}{4} < 2\pi.
    \endaligned
\end{equation}
This verifies Eqn. \eqref{eqn:cond1}.

Proof of claim
2. If $m_i$ differs from $m_{i-1}$ by more than one, then the path $(p\circ\gamma)(t)$ would cross both the real axis and the imaginary axis as $t$ increases from $t_{i-1}$ to $t_i$. As a consequence, the argument of $(p\circ\gamma)(t)$ would change at least 
by $\pi/4$. Since 
\begin{equation}
    arg((p\circ\gamma)(t)) = \sum_{j=1}^d arg((\gamma(t)-z_j),
\end{equation}
 {
there exists at least one $j$ such that $arg(\gamma(t_i)-z_j)$ differs from $arg(\gamma(t_{i-1})-z_j)$ by more than $\pi/(4d)$. Next we will show that this is impossible,  
because according to the choice of samples, $\gamma(t_i)$ is very close to $\gamma(t_{i-1})$.
}
 {
On one hand,
}
\begin{equation}
    \aligned
    |\gamma(t_i) - \gamma(t_{i-1})| \le L|t_i - t_{i-1}| \le \frac{\pi r}{12d}.
    \endaligned
\end{equation}

 {
On the other hand,
 both
 }
 {  $|\gamma(t_i) - z_j|$ and $|\gamma(t_{i-1}) - z_j|$ are at least $2r/3$ 
  and their arguments differ 
by at least
}
 $\pi/4d$. Let $\theta_1 = arg(\gamma(t_i) - z_j)$ and $\theta_2=arg(\gamma(t_{i-1}) - z_j)$, \ $\theta_1 \neq \theta_2$
 {
then
}
\begin{equation}
    \aligned
    |\gamma(t_i) - \gamma(t_{i-1})| = & 
    |(\gamma(t_i) - z_j) - (\gamma(t_{i-1}) - z_j)|\\
    \ge& |\frac{2r}{3}e^{\theta_1 i} - \frac{r}{3}e^{\theta_2 i}|\\ 
      =& \frac{2r}{3}|e^{(\theta_1 - \theta_2)i} - 1|\\
    >
    & \frac{2r}{3}\cdot|\theta_1-\theta_2|/2 \\
    \ge& \frac{\pi r}{12d}.
    \endaligned
\end{equation} \\
{
A contradiction proves the claim.
}

\end{proof}

{\bf Computation Complexity} The complexity of the algorithm is dominated by the evaluations of the polynomial at $N$ sampled points. Besides polynomial evaluation, the algorithm only requires arithmetic of small integers (mostly less than 8). The value of $N$ is proportional to the Lipschitz bound $L$ defined in Assumption 3. Thus the speed of the algorithm is determined by how fast it can obtain polynomial evaluations at sampled points. 
If the region is the unit disc $\{z:~|z|\le 1\}$, then we can evaluate $p(z)$  
at $2^h$ equally-spaced points on the unit boundary circle
 $\{z:~|z|=1\}$ and by using
 FFT, would correctly compute the number of roots of $p(z)$ in the disc at a arithmetic cost in $\tilde{O}(dL)$, which means
  ${O}(dL)$ up to
poly-logarithmic factors in $dL$.


\bigskip


\noindent {\bf Acknowledgements:}
Our research has been supported by the NSF Grant CCF--1563942 , NSF Grant CCF-1733834, and the PSC CUNY Award 69813 00 48.
  

\begin{thebibliography} {\hspace{1cm}}

\bibitem{BSSXY16}
Becker, R., Sagraloff, M., Sharma, V., Xu, J., Yap, C.: Complexity analysis of root
clustering for a complex polynomial. In: Proceedings of the ACM on International
Symposium on Symbolic and Algebraic Computation. pp. 71-78. ACM (2016)

\bibitem{BSSY18}
Becker, R., Sagraloff, M., Sharma, V., Yap, C.: A near-optimal subdivision algorithm
for complex root isolation based on the Pellet test and Newton iteration. J.
Symb. Comput. 86, 51-96 (2018)

\bibitem{H74}
Henrici, P.: Applied and computational complex analysis. vol. 1, power series,
integration, conformal mapping, location of zeros. John Wiley (1974)

\bibitem{HG69}
Henrici, P., Gargantini, I.: Uniformly convergent algorithms for the simultaneous
approximation of all zeros of a polynomial. In: Constructive Aspects of the Fundamental
Theorem of Algebra. pp. 77-113. Wiley-Interscience New York (1969)

\bibitem{Ho59}
Householder, A.S.: Dandelin, Lobachevski, or Graeffe? Am. Math. Mon. 66(6), 464-466 (1959)

\bibitem{IPY18}
Imbach, R., Pan, V.Y., Yap, C.: Implementation of a near-optimal complex root
clustering algorithm. In: International Congress on Mathematical Software. pp.
235-244. Springer (2018)

\bibitem{P00}
Pan, V.Y.: Approximating complex polynomial zeros: modified Weyl's quadtree
construction and improved newton's iteration. J. Complex. 16(1), 213-264 (2000)

\bibitem{R87}
Renegar, J.: On the worst-case arithmetic complexity of approximating zeros of
polynomials. J. Complex. 3(2), 90-113 (1987)

\bibitem{S82}
Sch{\"o}nhage, A.: The fundamental theorem of algebra in terms of computational complexity. Manuscript. Univ. of Tubingen, Germany (1982)
\bibitem{W24}
Weyl, H.: Randbemerkungen zu hauptproblem der mathematik. Mathematische
Zeitschrift 20, 131-150 (1924)

\bibitem{ZM12}
Zapata, J.L.G., Martin, J.C.D.: A geometric algorithm for winding number computation
with complexity analysis. J. Complex. 28(3), 320-345 (2012)

\bibitem{ZM14}
Zapata, J.L.G., Martin, J.C.D.: Finding the number of roots of a polynomial in
a plane region using the winding number. Comput. Math. Appl. 67(3), 555-568
(2014)

\end {thebibliography}

\end{document}